\newcommand{\CC}{\mathbb{C}}
\newcommand{\RR}{\mathbb{R}}
\newcommand{\FF}{\mathbb{F}}
\newcommand{\eps}{\varepsilon}
\newtheorem{thm}{Theorem}
\newtheorem{lem}[thm]{Lemma}
\newtheorem{cor}[thm]{Corollary}
\newtheorem{defn}[thm]{Definition}
\newtheorem{rem}[thm]{Remark}
\newtheorem{conj}[thm]{Conjecture}
\begin{document}
\pagenumbering{arabic}
\title{Counting higher order tangencies for plane curves}
\author{Joshua Zahl\thanks{University of British Columbia, Vancouver BC. Supported by a NSERC Discovery Grant.}}
\date{\today}
\maketitle
 \begin{abstract}
 We prove that $n$ plane algebraic curves determine $O(n^{(k+2)/(k+1)})$ points of $k$--th order tangency. This generalizes an earlier result of Ellenberg, Solymosi, and Zahl on the number of (first order) tangencies determined by $n$ plane algebraic curves.
 \end{abstract} 
 \section{Introduction}
In \cite{ESZ}, Ellenberg, Solymosi, and the author proved that $n$ plane algebraic curves determine $O(n^{3/2})$ points of tangency. In this paper, we will consider the question of higher-order tangencies. We will show that $n$ plane algebraic curves determine $O(n^{\frac{k+2}{k+1}})$ points of $k$--th order tangency. 

Before stating our result, we must precisely define what it means for two curves to have $k$--th order tangency. Recall that a complex plane algebraic curve is a set of the form 
\begin{equation}\label{defnPlaneCurve}
\gamma=\{(x,y)\in\CC^2\colon P(x,y)=0\},
\end{equation}
where $P\in\mathbb{C}[x,y]$ is a non-zero polynomial. If $\gamma$ is a plane algebraic curve, we can always express $\gamma$ in the form \eqref{defnPlaneCurve} where $P$ is square-free. In this case we say that the degree of $\gamma$ is equal to the degree of $P$, and a point $p\in\gamma$ is called smooth if $\nabla P(p)\neq 0$.

\begin{defn}
Let $k\geq 1$ be an integer. Let $\gamma,\tilde\gamma$ be algebraic curves in $\CC^2$. Let $p$ be a smooth point of both $\gamma$ and of $\tilde\gamma$. Applying a rotation if necessary, we can assume that neither $\gamma$ nor $\tilde\gamma$ have vertical tangent at $p$. In a neighborhood of $p$, we will parameterize $\gamma$ as $(t,h(t))$ and $\tilde\gamma$ as $(t,\tilde h(t))$. We say that $\gamma$ and $\tilde\gamma$ are tangent at $p=(x,y)$ to order $\geq k$ if $|h(t)-\tilde h(t)| = O(|t-x|^k)$ as $t\to x$.
\end{defn}

\begin{defn}\label{countOfKTangencies}
Let $\mathcal{C}$ be a set of irreducible algebraic curves in $\CC^2$ and let $k\geq 1$ be an integer. For each $p\in\CC^2$, define
$$
m_{k,\mathcal{C}}(p) = |\{\gamma\in\mathcal{C}\colon \textrm{there exists}\ \tilde\gamma\in\mathcal{C}\ \textrm{with}\ \tilde\gamma\neq\gamma\ \textrm{so that}\ \gamma\ \textrm{and}\ \tilde\gamma\ \textrm{are tangent to order}\ \geq k\ \textrm{at}\ p\}|.
$$ 
\end{defn}

Our main result is the following bound on the number of $k$--th order tangencies. Here and throughout, all implicit constants may depend on $k$ (the order of tangency) and on the maximum degree of the curves. 
\begin{thm}\label{mainThm}
Let $\mathcal{C}$ be a set of $n$ irreducible algebraic curves in $\CC^2$ and let $k\geq 1$ be an integer. Then
\begin{equation}\label{boundOnNumberTangencies}
\sum_{p\in\CC^2}m_{k,\mathcal{C}}(p)=O(n^{\frac{k+2}{k+1}}).
\end{equation}
\end{thm}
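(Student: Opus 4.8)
The plan is to reinterpret a $k$-th order tangency as an ordinary incidence between lifted curves in $\CC^{k+2}$ and then bound those incidences by polynomial partitioning; the difficulty is concentrated in the analysis of the partitioning hypersurface, which I would handle by a double induction on the ambient dimension and on $k$ that uses the contact geometry of the jet space. Write $D$ for the maximum degree. Unwinding the definition, $\gamma$ and $\tilde\gamma$ are tangent to order $\geq k$ at a smooth non-vertical point $p=(x,y)$ precisely when $h^{(j)}(x)=\tilde h^{(j)}(x)$ for $0\le j\le k$; writing $\gamma=\{P=0\}$ and using the implicit function theorem to express $h',\dots,h^{(k)}$ as fixed rational functions of the partial derivatives of $P$, this says that the $k$-th prolongations $\gamma^{(k)}\subset\CC^{k+2}$, the Zariski closures of $\bigl\{\bigl(x,y,h'(x),\dots,h^{(k)}(x)\bigr):(x,y)\in\gamma\bigr\}$, pass through a common point over $p$. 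Each $\gamma^{(k)}$ is an irreducible curve of degree $O(1)$; the projection $\pi\colon\CC^{k+2}\to\CC^2$ onto the first two coordinates maps $\gamma^{(k)}$ birationally onto $\gamma$, so distinct curves have distinct prolongations and two distinct prolongations meet in $O(1)$ points. Since a curve contributing to $m_{k,\mathcal C}(p)$ has a branch whose prolongation passes through a point lying on at least two prolongations, we get
\[
\sum_{p\in\CC^2}m_{k,\mathcal C}(p)\ \le\ \sum_{q\in\CC^{k+2}}\mu(q)\cdot\mathbf 1[\mu(q)\ge 2],\qquad \mu(q):=\#\{\gamma\in\mathcal C:q\in\gamma^{(k)}\},
\]
so it suffices to bound the number of incidences between the $n$ curves $\gamma^{(k)}$ and the set of points lying on at least two of them.

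Two structural facts make this possible, and both are essential, since no bound below $O(n^2)$ holds for $n$ arbitrary bounded-degree curves in $\CC^{k+2}$ with $O(1)$ pairwise intersections (take $n$ concurrent lines in a plane). First, each $\gamma^{(k)}$ is an integral curve of the Cartan distribution $dy=y'\,dx,\ \dots,\ dy^{(k-1)}=y^{(k)}\,dx$ on $\CC^{k+2}$; equivalently, if $W\subset\CC^{k+2}$ is an irreducible hypersurface whose defining polynomial has positive degree in $y^{(k)}$, then the prolongation curves contained in $W$ are the integral curves of an order-$k$ ODE, and two distinct solutions of such an ODE have the same $(k-1)$-jet at no point where the ODE is nondegenerate --- hence they are tangent to order $\ge k$ nowhere except over the singular and nontransverse loci of $W$, which form a subvariety of dimension $\le k$. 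Second, if two distinct prolongations meet with a common tangent line at $q$, then $\gamma$ and $\tilde\gamma$ are tangent to order $\ge k+1$ at $\pi(q)$; since order-$(k+1)$ tangencies contribute only $O(n^{(k+3)/(k+2)})=o(n^{(k+2)/(k+1)})$ --- and none at all once $k\ge D^2$, by B\'ezout --- such pairs can be charged to a lower-order instance of Theorem~\ref{mainThm}.

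For the main estimate I would partition $\RR^{2k+4}$ by a real polynomial $Z$ of degree $r=n^{1/(2(k+1))}$. Each open cell meets $O(n/r^{2k+2})$ of the prolongations, a pair of which accounts for $O(1)$ rich points, so the cells contribute $O(n^2/r^{2k})=O(n^{(k+2)/(k+1)})$; rich points on $Z$ incident to a curve not contained in $Z$ cost at most $\sum_{\gamma^{(k)}\not\subset Z}|\gamma^{(k)}\cap Z|=O(nr)=o(n^{(k+2)/(k+1)})$. What remains is the weighted incidence count for the $m\le n$ prolongations contained in $Z$. Decompose $Z$ into irreducible components: on a component whose equation has degree $0$ in $y^{(k)}$, which is a cylinder over a hypersurface in a lower-order jet space, iterating the first structural fact downward shows the prolongations solve a lower-order ODE and so have no order-$\ge k$ tangency; on a component whose equation has positive degree in $y^{(k)}$, the first structural fact confines all order-$\ge k$ tangencies of its prolongations to a subvariety of dimension $\le k$, on which one recurses (partitioning inside the subvariety, keeping the prolongation structure). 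Organized as an induction --- decreasing the ambient dimension in the zero-set case, decreasing $k$ wherever the second structural fact is invoked, with base cases the trivial $O(n^2)$ bound for curves in $\CC^2$ and the vacuous case $k\ge D^2$ --- this should close the argument.

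I expect essentially all of the work to be in the last step: making the ODE-uniqueness statement quantitative enough to control prolongations lying on $Z$ and on its singular locus, handling the recursion even though $\deg Z$ grows with $n$ (which forces a further round of partitioning inside the lower-dimensional pieces), and arranging the two inductions so that neither the descent in dimension nor the passage to the zero set degrades the exponent $(k+2)/(k+1)$. The jet-space reformulation and the cell bookkeeping are routine.
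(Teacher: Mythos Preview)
Your jet-space lift $\gamma\mapsto\gamma^{(k)}$ and the ODE-uniqueness observation (your ``first structural fact'') are exactly the two ingredients the paper uses; the divergence is in how they are deployed. The paper does \emph{not} use polynomial partitioning, induction on ambient dimension, or your second structural fact, and thereby avoids the entire recursion you identify as containing ``essentially all of the work.''

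Instead the paper runs a Kaplan--Sharir--Shustin style degree-reduction argument directly in $\CC^{k+2}$. First prune: iteratively delete from $\mathcal C$ any curve whose lift meets the remaining lifts in fewer than $An^{1/(k+1)}$ points; the deleted curves contribute $O(n^{(k+2)/(k+1)})$ in total. Call the surviving set $\mathcal C_N$, and take a nonzero $P_k\in\CC[x,y,z_1,\dots,z_k]$ of \emph{minimal} degree vanishing on every lift $L_k(\gamma)$, $\gamma\in\mathcal C_N$; parameter counting gives $\deg P_k=O(|\mathcal C_N|^{1/(k+1)})=O(n^{1/(k+1)})$. Now your ODE argument says that whenever two distinct lifts in $Z(P_k)$ meet at a point, $\partial P_k/\partial z_k$ vanishes there; since every surviving lift carries $\geq An^{1/(k+1)}>C\deg P_k$ such points, B\'ezout forces $L_k(\gamma)\subset Z(\partial P_k/\partial z_k)$ for all $\gamma\in\mathcal C_N$. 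Minimality of $\deg P_k$ then gives $\partial P_k/\partial z_k\equiv 0$, so $P_k=P_{k-1}(x,y,z_1,\dots,z_{k-1})$, and $P_{k-1}$ is again of minimal degree among polynomials vanishing on the $(k-1)$-lifts. Iterating through $z_{k-1},\dots,z_1$ yields a plane polynomial $P_0\in\CC[x,y]$ of degree $O(n^{1/(k+1)})$ containing every curve in $\mathcal C_N$, whence $|\mathcal C_N|=O(n^{1/(k+1)})$.

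So the paper replaces your ``cells plus boundary'' dichotomy by a single minimal-degree polynomial and an iteration that strips jet variables one at a time. There is no real-variable partitioning in $\RR^{2k+4}$, no recursion into singular loci of a hypersurface whose degree grows with $n$, and no need to separately charge higher-order tangencies. Your route may be completable, but the minimality trick makes the difficult zero-set analysis evaporate: rather than analyzing which prolongations can lie inside a given $Z(P)$, one simply observes that $\partial P/\partial z_k$ is a competitor of smaller degree and must therefore vanish identically.
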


\begin{rem}
Our definition of tangency only applies to smooth points. However, since a degree $D$ plane curve has at most $\binom{D-1}{2}$ singular points, any collection of $n$ plane curves collectively have $O(n)$ singular points.
\end{rem}

Theorem \ref{mainThm} is motivated by several open problems in combinatorics, harmonic analysis, and geometric measure theory. We shall discuss these connections below.
\medskip

\noindent {\bf Curve cutting and incidence geometry}.

In \cite{SZ}, Sharir and the author showed that $n$ algebraic curves in $\RR^2$ can be cut into $O(n^{3/2}\operatorname{polylog}(n))$ Jordan arcs, so that each pair of arcs intersect at most once. This cutting technique was then used to prove new bounds in incidence geometry. Bounding the number of (first order) curve tangencies was an important model problem for curve cutting, since an arrangement of curves can be slightly perturbed so that pairs of tangent curves become pairs of curves that intersect in two nearby places, and each of these perturbed tangencies necessitates a cut. Indeed, the curve cutting arguments in \cite{SZ} built off the tangency bound arguments developed by Ellenberg, Solymosi, and the author from \cite{ESZ}. This motivates the following conjecture.

\begin{conj}\label{curveCuttingConj}
Let $\mathcal{C}$ be a set of $n$ irreducible algebraic curves in $\RR^2$. Then the curves in $\mathcal{C}$ can be cut into  $O(n^{(k+2)/(k+1)+\eps})$ Jordan arcs, so that each pair of arcs intersect at most $k$ times.
\end{conj}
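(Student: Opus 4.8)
The plan is to deduce Conjecture~\ref{curveCuttingConj} from Theorem~\ref{mainThm} by following the curve-cutting strategy of \cite{SZ}, which handles the case $k=1$, and feeding in the $k$-th order tangency bound \eqref{boundOnNumberTangencies} wherever \cite{SZ} uses the first-order tangency bound of \cite{ESZ}. Several reductions are routine: a real plane curve is a union of $O(1)$ complex-irreducible curves of bounded degree and real order-$\geq k$ tangencies are complex ones, so Theorem~\ref{mainThm} applies in the real setting; cutting additionally at the $O(n)$ singular points ensures that the resulting pieces are Jordan arcs; and after a small perturbation within the bounded-degree family (or by treating tangential contacts with multiplicity) one may assume the arrangement is in general position, in which case the cut-numbers of the perturbed and original arrangements differ by at most a constant factor and an additive $O(n)$.

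\textbf{Step A (reduction to a lens bound).} By a higher-order extension of the Tamaki-Tokuyama correspondence between pseudo-segment cuttings and lenses, the minimum number of cuts making every pair of resulting arcs meet at most $k$ times should be, up to a factor $n^{\eps}$ and an additive $O(n)$, comparable to the maximum size $L$ of a family of pairwise non-overlapping \emph{$(k+1)$-fold lenses}: pairs of sub-arcs $\alpha\subset\gamma$ and $\tilde\alpha\subset\tilde\gamma$ meeting in exactly $k+1$ consecutive points of $\gamma\cap\tilde\gamma$ and otherwise disjoint. Proving this equivalence for general $k$ is itself part of the work; the extremal estimates it relies on (generalized Davenport-Schinzel bounds for sequences with no long alternation) are mildly superlinear, which I expect to be the source of the $n^{\eps}$ in the conjectured bound. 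It then remains to prove $L=O(n^{(k+2)/(k+1)+\eps})$.

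\textbf{Step B (bounding $L$).} Following \cite{SZ}, split the chosen lens family by a scale parameter. ``Spread out'' lenses are handled by iterating a polynomial partition: a degree-$r$ polynomial cuts $\RR^2$ into $O(r^2)$ cells each meeting $O(n/r)$ curves; a lens is either essentially confined to one cell, in which case we recurse, or it crosses the zero set and is charged to one of the $O(nr)$ curve-partition incidences (bounded via B\'ezout). The ``short'' lenses --- those contained in a small ball relative to the local configuration --- are handled by a limiting argument inside the compact space of degree-$\leq D$ curves: writing $\gamma$ as $(t,h(t))$ and $\tilde\gamma$ as $(t,\tilde h(t))$ near such a lens, $h-\tilde h$ has $k+1$ zeros on a short interval while remaining small, and as the lens shrinks these zeros coalesce, so the limiting curves are tangent to order $\geq k$ at the point to which the lens contracts. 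By Theorem~\ref{mainThm} there are only $O(n^{(k+2)/(k+1)})$ such tangency points, and each curve meets $O(1)$ short lenses at a given point (bounded degree together with the non-overlapping condition), so short lenses number $O(n^{(k+2)/(k+1)})$. Choosing the scale parameter and the degree $r$ to balance the spread-out contribution, the incidence term, and the $O(\log n)$ recursion depth then yields $L=O(n^{(k+2)/(k+1)+\eps})$, as when $k=1$.

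\textbf{Main obstacle.} The crux is the charging of short lenses to order-$\geq k$ tangencies. One may not simply introduce, for each lens, an auxiliary curve realizing the collapsed tangency, since this would blow the family up to $\Theta(n^2)$ curves and destroy the bound; the collapse must be carried out internally --- for instance by lifting to the space of $k$-th order contact elements, where order-$\geq k$ tangencies become genuine intersections of lifted curves and a $(k+1)$-fold lens becomes a short arc along which two lifts nearly coincide --- and one must control the degenerations (two curves merging in the limit, vertical tangents, contact points running off to infinity), either ruling them out through the general-position reduction or absorbing them into the $O(n)$ term. Secondary obstacles are establishing the higher-order Tamaki-Tokuyama equivalence of Step~A with loss only $n^{\eps}$, and checking that the recursion in Step~B preserves the exponent $(k+2)/(k+1)$; both should follow the $k=1$ template of \cite{SZ} but require the full generalized Davenport-Schinzel machinery.
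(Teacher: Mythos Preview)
The statement you are attempting to prove is Conjecture~\ref{curveCuttingConj}, which the paper explicitly leaves open; there is no proof in the paper to compare against. The paper proves only Theorem~\ref{mainThm}, the tangency bound, and presents curve cutting as a motivating open problem.

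Your proposal is therefore a research outline for an open problem rather than a proof, and you essentially acknowledge this yourself by labelling the key steps as obstacles. The two places where the argument is genuinely incomplete are precisely the ones you flag. First, the higher-order Tamaki--Tokuyama equivalence in Step~A is not in the literature: the known correspondence between cuts and lenses is specific to pseudo-segments (the $k=1$ case), and establishing the analogous statement for ``each pair of arcs meets at most $k$ times'' would require new Davenport--Schinzel-type structural results; without it Step~A is an assertion, not a reduction. Second, the short-lens charging in Step~B is the heart of the matter and is not carried out: the compactness/limiting argument you sketch must be done uniformly over the whole family and must interact correctly with the recursion and the non-overlapping condition, and even in the $k=1$ case of \cite{SZ} this is the most delicate part of the paper. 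Saying that the limiting curves are tangent to order~$\geq k$ and then invoking Theorem~\ref{mainThm} hides exactly the step that has so far prevented anyone from proving the conjecture.

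In short, your plan is the natural one and matches what the paper suggests as motivation, but it remains a plan: the conjecture is open, and the gaps you identify are real gaps, not routine details.
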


Bounding the number of $k$-th order curve tangencies is a simpler model for the problem of cutting curves, since an arrangement of curves can be slightly perturbed so that pairs of curves that have $k$-th order tangency become pairs of curves that intersect at $k+1$ nearby places, and thus each of these perturbed tangencies would require a cut. If Conjecture \ref{curveCuttingConj} is true, it would lead to progress for a number of questions in incidence geometry. Chief among these is the following difficult open problem in plane incidence geometry.

\begin{conj}\label{incidenceConj}
Let $\mathcal{P}\subset\RR^2$ be a set of $n$ points and let $\mathcal{C}$ be a set of $n$ irreducible algebraic plane curves. Then the number of point-curve incidences is $O(n^{4/3})$.
\end{conj}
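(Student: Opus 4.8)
Conjecture \ref{incidenceConj} is a well-known open problem, so what follows is a \emph{program} rather than a proof; I will flag the step at which a genuinely new idea seems to be required.

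\emph{Preliminary reductions.} By a K\H{o}v\'ari--S\'os--Tur\'an and Cauchy--Schwarz argument one may assume that no point of $\mathcal P$ lies on more than $O(n^{1/2})$ curves of $\mathcal C$ and no curve of $\mathcal C$ contains more than $O(n^{1/2})$ points of $\mathcal P$, since incidences involving larger multiplicities contribute directly $O(n^{4/3})$; dyadic pigeonholing then restricts us to a sub-configuration in which all point- and curve-multiplicities are within a constant factor. Irreducibility and bounded degree are used, so far, only through B\'ezout's theorem: two distinct curves meet in $O(1)$ points.

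\emph{Reduction to cutting, and why cutting alone is insufficient.} If the $n$ curves could be cut into $N$ Jordan arcs pairwise crossing at most once (pseudo-segments), then the Sz\'ekely-type crossing-number argument for pseudo-segments gives $O(n^{2/3}N^{2/3}+n+N)$ incidences \emph{with no hypothesis on how many curves pass through a given pair of points}, so taking $N=O(n)$ would finish the proof. This is the $k=1$ case of Conjecture \ref{curveCuttingConj}, but with the bound on the number of arcs strengthened from $O(n^{3/2+\eps})$ to $O(n)$. Unfortunately $O(n^{1+\eps})$ is impossible in general --- already for families of $n$ circles one is forced to make $\Omega(n^{4/3})$ cuts --- so pure curve cutting cannot prove Conjecture \ref{incidenceConj}. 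One is therefore led to cut only \emph{locally}: apply a polynomial partition of degree $r$ in the style of Guth--Katz, distribute $\mathcal P$ and $\mathcal C$ across the $O(r^2)$ cells, and balance the per-cell cutting cost against $r$. Points on the zero set $Z$ of the partitioning polynomial are handled separately: at most $O(r)$ of the curves can lie in $Z$ (distinct such curves would be distinct irreducible components of a curve of degree $O(r)$), while each remaining curve meets $Z$ in $O(r)$ points by B\'ezout. Inside a cell, the number of arcs needed to make the surviving curve pieces pairwise simple is governed by the number of tangencies occurring there, which is controlled by the $k=1$ case of Theorem \ref{mainThm} and its local refinements; recursing on the cells and on $\deg Z$ closes such an argument, but because the per-cell estimate is of tangency type it yields only an exponent approaching $3/2$, not $4/3$.

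\emph{The main obstacle.} To reach the exponent $4/3$ one must confront the ``many curves through a pair of points'' phenomenon head on. I would aim for a dichotomy: either only $O(1)$ curves of $\mathcal C$ pass through any pair of points, in which case $\mathcal C$ has two degrees of freedom and the Pach--Sharir incidence bound (or a partitioning argument) already yields $O(n^{4/3})$; or many pairs of points are traversed by many curves, which ought to force $\mathcal C$ to lie in a low-dimensional algebraic family of a very restricted shape --- morally, one for which the incidence count is governed by the Szemer\'edi--Trotter theorem for lines, or for the members of a pencil. Establishing such a structural classification for families of plane curves, an Elekes--R\'onyai / Guth--Katz type theorem, is in my view the crux of the problem, and it is precisely the ingredient that is currently missing; by comparison, the partitioning bookkeeping, the induction on dimension and on $\deg Z$, and the use of Theorem \ref{mainThm} to absorb tangencies are routine.
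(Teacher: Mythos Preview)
The paper does not prove Conjecture~\ref{incidenceConj}; it is stated there explicitly as an open problem, with the remark that ``there has been no progress on Conjecture~\ref{incidenceConj} beyond the trivial bound $O(n^{3/2})$.'' So there is no proof to compare against, and you are right to present a program rather than a proof and to flag the missing structural ingredient.

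A couple of small points of comparison with the paper's discussion. The paper observes that the $k=2$ case of Conjecture~\ref{curveCuttingConj} (cutting into arcs any two of which meet at most twice) together with the crossing lemma would yield $O(n^{7/5+\eps})$ incidences; your write-up instead discusses the $k=1$ cutting into pseudo-segments and correctly notes that the required $O(n)$ arcs is unattainable (circles already force $\Omega(n^{4/3})$ cuts). Both routes stop short of $4/3$. Your identification of the core obstacle --- the lack of a ``two degrees of freedom'' hypothesis, i.e.\ many curves through a pair of points --- is exactly the issue, and the dichotomy you propose (either $O(1)$ curves through any pair, so Pach--Sharir applies, or a structural/Elekes--R\'onyai-type rigidity forces the family into something line-like) is a reasonable formulation of what would be needed. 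But as you say, that structural theorem is the genuinely missing idea; nothing in the paper, including Theorem~\ref{mainThm}, supplies it.
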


To date, there has been no progress on Conjecture \ref{incidenceConj} beyond the trivial bound $O(n^{3/2})$, which follows from the Cauchy-Schwarz inequality and the observation that each pair of curves intersect in $O(1)$ places. However, if the $k=2$ case of Conjecture \ref{curveCuttingConj} were true, then by the crossing lemma (see i.e. \cite[Theorem 8.2]{Z}), it would imply that $n$ points and $n$ irreducible algebraic curves in the plane determine $O(n^{7/5+\eps})$ point-curve incidences.

\medskip

\noindent {\bf Approximate tangency and Besicovitch-Rado-Kinney sets}.
A Besicovitch-Rado-Kinney (BRK) set is a Borel subset of $\RR^2$ that contains a circle of every radius $1\leq r\leq 2$. Somewhat surprisingly, Besicovitch and Rado \cite{BR} and Kinney \cite{K} showed that there exist BRK sets that have Lebesgue measure 0. In \cite{W}, Wolff proved that every BRK set must have Hausdorff dimension 2. The study of BRK sets is closely related to both the Kakeya problem and the behavior of solutions to the wave equation in $2+1$ dimensions. See Wolff's survey \cite{W2} for an overview of the problem and its connections to harmonic analysis. 

To prove that every BRK set must have Hausdorff dimension 2, Wolff recast the question as a (discretized) incidence geometry problem. We will briefly summarize Wolff's reformulation here. Let $\delta>0$ be a small parameter and let $\mathcal{C}$ be a set of circles in the plane, each of radius between 1 and 2, with the property that each pair of circles from $\mathcal{C}$ have radii that differ by at least $\delta$. For each $C\in\mathcal{C}$, let $C^{\delta}$ be the $\delta$-neighborhood of $C$. The quantity
\begin{equation}\label{circleMaximalFnBd}
\int_{\RR^2}\Big(\sum_{C\in\mathcal{C}}\chi_{C^\delta}\Big)^{3/2}
\end{equation}
is closely related to the number of ``approximate'' tangencies amongst the circles in $\mathcal{C}$. Indeed, if two circles $C_1$ and $C_2$ intersect transversely, then $C_1^\delta\cap C_2^{\delta}$ has area roughly $\delta^2$. On the other hand, if $C_1$ and $C_2$ are approximately tangent and have very different radii, then $C_1^\delta\cap C_2^{\delta}$ has area roughly $\delta^{3/2}$ (which is much larger than $\delta^2$). Thus the expression \eqref{circleMaximalFnBd} can be thought of as counting the number of approximate tangencies amongst the circles in $\mathcal{C}$.

Wolff adapted the combinatorial incidence bounds of Clarkson, Edelsbrunner, Guibas, Sharir, and Wetzl from \cite{CEGSW} to prove that every set of $n$ circles in the plane determine $O(n^{3/2+\eps})$ points of ``approximate'' tangency, and this allowed him to control the size of expressions such as \eqref{circleMaximalFnBd}.

One can generalize the notion of a BRK set to other classes of curves, and it is conjectured that these generalized BRK sets must also have Hausdorff dimension 2. For curves that behave like pseudocircles in a certain geometric sense, this was proved by the author in \cite{Z2}. For other types of curves, however, the problem remains open. One potential way to analyze generalized BRK sets is to consider analogues of \eqref{circleMaximalFnBd} where the set of circles $\mathcal{C}$ is replaced by a different set of curves. Note that two distinct circles can only be tangent to order one. If $\mathcal{C}$ is replaced by a set of curves that can be tangent to order $k_0$, then the exponent in \eqref{circleMaximalFnBd} should be changed from $3/2$ to $(k_0+2)/(k_0+1)$. We are then faced with the problem of bounding the number of ``approximate'' $k$-th order tangencies amongst the curves in $\mathcal{C}$ for each $k=1,\ldots,k_0$, and we would like to show that there are $O(n^{(k+2)/(k+1)+\eps})$ such tangencies. Theorem \ref{mainThm} is a simple model case for this type of problem.

\section{Preliminaries}
In this section we will introduce the main tools needed to prove Theorem \ref{mainThm}. As in the introduction, all implicit constants many depend on $k$ (the order of tangency), and on $D$ (the maximum degree of the curves we are considering). 

Our proof will use the ``lifting'' method developed by Ellenberg, Solymosi, and the author in \cite{ESZ}. The basic idea is to lift a plane curve $\gamma\subset\CC^2$ to a space curve $L_k(\gamma)\subset\CC^{2+k}$. The curve $L_k(\gamma)$ will have the property that if $(x,y,z_1,\ldots,z_k)\in L_k(\gamma)$, then $(x,y)\in\gamma$ and the numbers $z_1,\ldots,z_k$ describe the $k$-th order tangency data of $\gamma$ at $(x,y)$. 

\begin{defn}
Let $\gamma\subset\CC^2$ be an irreducible curve that is not a vertical line, and let $k\geq 1$ be an integer. A curve $\beta\subset\CC^{2+k}$ is called a $k$-th order lift of $\gamma$ if it satisfies the following property: 

Let $(x,y)\in\gamma$ be a smooth point where $\gamma$ does not have vertical tangent, let $U$ be a neighborhood of $(x,y)$, let $V$ be a neighborhood of $x$, and let $g\colon V\to\CC$ be a function so that 
$$
\gamma\cap U = \{(t,\tilde y)\in U\colon \tilde y = g(t)\}.
$$ 
Then for every $(t,\tilde y)\in \gamma\cap U$, we have 
\begin{equation}
(t,\tilde y, z_1,\ldots, z_k)\in \beta\quad \textrm{if and only if}\quad z_j = \frac{d^j}{dt^j}g(t),\ j=1,\ldots,k.
\end{equation}
\end{defn}

\begin{lem}[Lifting]
Let $\gamma\subset\CC^2$ be an irreducible curve that is not a vertical line, and let $k\geq 1$ be an integer. Then there exists an irreducible curve $L_{k}(\gamma)\subset\CC^{2+k}$ of degree $O(1)$ that is a $k$-th order lift of $\gamma$.
\end{lem}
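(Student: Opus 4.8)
The plan is to realize $L_k(\gamma)$ explicitly as the Zariski closure of the curve obtained from $\gamma$ by adjoining the first $k$ derivatives of its local graph, and then to extract the degree bound from a Bézout-type inequality applied to the polynomial relations these derivatives satisfy. First I will set up notation. Since $\gamma$ is irreducible, write $\gamma=\{P=0\}$ with $P\in\CC[x,y]$ irreducible of degree $D$. Because $\gamma$ is not a vertical line, $P$ genuinely involves $y$, and if $\partial_y P$ vanished on all of $\gamma$ then $P\mid\partial_y P$, which is impossible by comparing degrees in $y$; hence $\gamma^\circ:=\{p\in\gamma:\partial_y P(p)\neq0\}$ is a nonempty, hence dense, Zariski-open subset of $\gamma$, and it is precisely the set of smooth points of $\gamma$ with non-vertical tangent. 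Next, on $\CC[x,y,z_1,\dots,z_k]$ I introduce the ``total derivative'' $\mathcal{D}=\partial_x+z_1\partial_y+z_2\partial_{z_1}+\cdots+z_k\partial_{z_{k-1}}$, designed so that $\frac{d}{dt}F\big(t,g(t),g'(t),\dots,g^{(k)}(t)\big)=(\mathcal{D}F)\big(t,g(t),\dots,g^{(k)}(t)\big)$ for every polynomial $F$ and holomorphic $g$. Setting $P_0=P$ and $P_{j+1}=\mathcal{D}P_j$, an easy induction shows $P_j=z_j\,\partial_y P+R_j(x,y,z_1,\dots,z_{j-1})$ for a polynomial $R_j$ independent of $z_j,\dots,z_k$, and that $\mathcal{D}$ does not raise total degree, so $\deg P_j\le D$ for $0\le j\le k$. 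On $\gamma^\circ$ the rule $z_j:=-R_j(x,y,z_1,\dots,z_{j-1})/\partial_y P$ defines, recursively, regular functions, and the chain-rule identity above shows that whenever $g$ parameterizes a piece of $\gamma$ over $\gamma^\circ$ one has $P_j(t,g(t),\dots,g^{(j)}(t))=0$, hence $g^{(j)}(t)=z_j(t,g(t))$.

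I then define $\phi\colon\gamma^\circ\to\CC^{2+k}$ by $\phi=(x,y,z_1,\dots,z_k)$; the coordinate projection $\pi$ onto the first two coordinates satisfies $\pi\circ\phi=\mathrm{id}$, so $\phi$ is an isomorphism onto a locally closed, irreducible, $1$-dimensional subset, and I set $L_k(\gamma):=\overline{\phi(\gamma^\circ)}$, an irreducible curve. Since each $z_j$ was defined precisely to make it so, one checks directly that $P_j(\phi(p))=0$ for all $p\in\gamma^\circ$ and all $j$, so $L_k(\gamma)\subseteq V(P_0,\dots,P_k)$; conversely, over $\gamma^\circ$ the equations $P_1=\cdots=P_k=0$ determine $z_1,\dots,z_k$ uniquely (because $\partial_y P\neq0$ there), so $V(P_0,\dots,P_k)\cap\pi^{-1}(\gamma^\circ)=\phi(\gamma^\circ)=L_k(\gamma)\cap\pi^{-1}(\gamma^\circ)$. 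The lift property is then immediate: for $(x,y),U,V,g$ as in the definition, every point $(t,g(t))$ lies in $\gamma^\circ$, so the ``if'' direction reads $(t,g(t),\dots,g^{(k)}(t))=\phi(t,g(t))\in L_k(\gamma)$, while the ``only if'' direction says a point of $L_k(\gamma)$ lying over a point of $\gamma^\circ$ must equal $\phi$ of that point, i.e. its last $k$ coordinates are $g^{(1)}(t),\dots,g^{(k)}(t)$.

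It remains to bound $\deg L_k(\gamma)$, and this is where I expect the only real difficulty to lie. Knowing merely that $L_k(\gamma)\subseteq V(P_0,\dots,P_k)$ is not enough, since a bounded-degree variety may contain curves of arbitrarily large degree; so I will first argue that $L_k(\gamma)$ is in fact an irreducible \emph{component} of $V:=V(P_0,\dots,P_k)$. Indeed, if $L_k(\gamma)\subsetneq W\subseteq V$ with $W$ irreducible of dimension $\ge2$, then $W\cap\pi^{-1}(\gamma^\circ)$ is open in $W$ and nonempty (it cannot be empty: otherwise $\pi(W)$ would be a point, whereas $\pi(L_k(\gamma))=\gamma$), hence dense, hence $2$-dimensional, contradicting $W\cap\pi^{-1}(\gamma^\circ)\subseteq\phi(\gamma^\circ)$, which is a curve. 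Thus $L_k(\gamma)$ is a component of $V$, and an affine Bézout inequality bounds the sum of the degrees of the components of $V(P_0,\dots,P_k)$ by $\prod_{j=0}^k\deg P_j\le D^{k+1}$; since $D$ and $k$ are fixed, this gives $\deg L_k(\gamma)\le D^{k+1}=O(1)$.

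The main obstacle, then, is the degree estimate: one has to resist the temptation to read it off directly from $L_k(\gamma)\subseteq V(P_0,\dots,P_k)$ and instead establish that $L_k(\gamma)$ exhausts a full irreducible component of that variety before applying Bézout. A secondary, more routine point to handle with care is the claim used in the previous paragraph that every point $(t,\tilde y)$ appearing in the definition of a $k$-th order lift lies in $\gamma^\circ$: this is the local complex-analytic fact that a piece of $\gamma$ which is the graph of a holomorphic function is automatically smooth with non-vertical tangent, and it is precisely what allows the ``only if'' conclusion to hold at every such point rather than merely on a dense open subset of $\gamma$.
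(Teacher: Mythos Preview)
Your proof is correct and follows essentially the same route as the paper's: both obtain the relations $P_0,\dots,P_k$ by implicit differentiation, identify $L_k(\gamma)$ as an irreducible component of $V(P_0,\dots,P_k)$, and bound its degree via B\'ezout (you construct $L_k(\gamma)$ first as $\overline{\phi(\gamma^\circ)}$ and then show it is a component, using the generalized B\'ezout inequality, whereas the paper builds the variety first, asserts the intersection is proper, and then selects the unique component dominating $\gamma$---but these are cosmetic differences). One small imprecision worth fixing: your chain-rule identity $\frac{d}{dt}F(t,g,\dots,g^{(k)})=(\mathcal{D}F)(t,g,\dots,g^{(k)})$ is only valid when $F$ does not depend on $z_k$, but since you apply it only to $P_0,\dots,P_{k-1}$, each of which depends only on $x,y,z_1,\dots,z_{k-1}$, the argument is unaffected.
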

\begin{proof}
Let $f\in\CC[x,y]$ be an irreducible polynomial with $Z(f) = \gamma$. Consider $y$ (locally) as a function of $x$, and implicitly differentiate $\frac{d}{dx}f(x,y)$ $k$ times; we obtain the $k$ polynomial equations $P_1(x,y,y^\prime)=0,\ P_2(x,y,y^{\prime},y^{\prime\prime})=0,\ldots, P_k(x,y,y^\prime,\ldots,y^{(k)})=0$. For example, if $f(x,y)=x^2 + y^2 - 1$ and if $k=2$, we obtain the equations $0=P_1(x,y,y^\prime) = 2x + 2yy^\prime$ and $0=P_2(x,y,y^\prime, y^{\prime\prime}) = 2 + 2(y^\prime)^2 + 2yy^{\prime\prime}$. 

Define 
\begin{equation*}
\begin{split}
\tilde L_k(\gamma) = \{(x,y,z_1,\ldots,z_k)\in\CC^{2+k}\colon &f(x,y) = 0,\ P_1(x,y,z_1)=0,\\
&\quad P_2(x,y,z_1,z_2)=0,\ldots,P_k(x,y,z_1,\ldots,z_k)=0\}.
\end{split}
\end{equation*}

The polynomial $f(x,y)$ is non-zero, and for each $j=1,\ldots,k$, the polynomial $P_j(x,y,z_1,\ldots,z_j)$ is of the form $z_j Q_j(x,y,z_1,\ldots,z_{j-1})+R_j(x,y,z_1,\ldots,z_{j-1})$, where $Q_j$ is non-zero. In particular, this means that  $\tilde L_k(\gamma) $ is a proper intersection of $k+1$ hypersurfaces in $\CC^{2+k}$, so it is a (possibly reducible) algebraic space curve of degree $O(1)$.

Let $\pi\colon \CC^{2+k}\to\CC^2$ be the projection to the $xy$ plane. The projection of each irreducible component of $\tilde L_k(\gamma) $ is either Zariski dense in $\gamma$ or is a union of finitely many points. If $(x,y)\in\gamma$ is a smooth point with non-vertical tangent, then by the implicit function theorem there exists a neighborhood $U$ of $(x,y)$, a neighborhood $V$ of $x$, and a function $g\colon V\to\CC$ so that 
$$
\gamma\cap U = \{(t,\tilde y)\in V\colon \tilde y=g(t)\}.
$$ 
Then if $(t,\tilde y)\in U$, we have $(t,\tilde y,z_1,\ldots z_k)\in \tilde L_k(\gamma)$ if and only if $z_j= \frac{d^j}{dt^j}g(t),\ j=1,\ldots,k$. In particular, if $\gamma$ is not a vertical line, then the fiber of the projection $\pi\colon \tilde L_k(\gamma)\to\gamma$ above a generic point of $\gamma$ has cardinality one. This implies that there exists a unique irreducible component of $\tilde L_k(\gamma)$ whose projection is dense in $\gamma$. Call this component $L_k(\gamma)$. We have already established that $L_k(\gamma)$ has the claimed properties. 
\end{proof}

We will also need the following two elementary results from complex analysis. 

\begin{thm}[Holomorphic implicit function theorem]\label{holoImplicitThm}
Let $U\subset\CC^{2+k}$ be open and let $f\colon U\to\CC$ be holomorphic. Let $(x,y,z_1,\ldots,z_k)\in U$ and suppose $f(x,y,z_1,\ldots,z_k)  = 0$. If $\frac{d}{dz_k}f(x,y,z_1,\ldots,z_k)\neq 0$, then there exists an open set $V\subset\CC^{2+(k-1)}$ containing $(x,y,z_1,\ldots,z_{k-1})$, a holomorphic function $g\colon V\to\CC$, and a neighborhood $W\subset U$ of $(x,y,z_1,\ldots,z_k)$ so that
$$
\{ (\tilde x, \tilde y, \tilde z_1,\ldots,\tilde z_k)\in W\colon f(\tilde x, \tilde y, \tilde z_1,\ldots,\tilde z_k) = 0  \}=\{ (\tilde x, \tilde y ,\tilde z_1\ldots,\tilde z_{k})\in W\colon \tilde z_k = g(\tilde x, \tilde y,\tilde z_1,\ldots,\tilde z_{k-1})  \}.
$$
\end{thm}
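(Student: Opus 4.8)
The plan is to reduce everything to the classical one‑variable argument principle, treating all coordinates except $z_k$ as parameters. Write $w=(\tilde x,\tilde y,\tilde z_1,\ldots,\tilde z_{k-1})$ for a point of $\CC^{2+(k-1)}$ near $w_0:=(x,y,z_1,\ldots,z_{k-1})$, and for each fixed $w$ regard $\zeta\mapsto f(w,\zeta)$ as a holomorphic function of the single complex variable $\zeta=\tilde z_k$. By hypothesis $f(w_0,z_k)=0$ and $\frac{\partial f}{\partial z_k}(w_0,z_k)\neq 0$, so $\zeta=z_k$ is a simple zero of $f(w_0,\cdot)$. Since the zeros of a nonzero holomorphic function are isolated and $U$ is open, I can pick $r>0$ small enough that $\{w_0\}\times\{|\zeta-z_k|\le r\}\subset U$ and $f(w_0,\zeta)\neq 0$ whenever $0<|\zeta-z_k|\le r$; in particular $f(w_0,\cdot)$ is nonvanishing on the circle $|\zeta-z_k|=r$.

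Next I would use continuity and compactness of the circle: there is a constant $c>0$ with $|f(w_0,\zeta)|\ge c$ for $|\zeta-z_k|=r$, and hence a connected neighborhood $V$ of $w_0$ with $V\times\{|\zeta-z_k|\le r\}\subset U$ and $f(w,\zeta)\neq 0$ for all $w\in V$ and all $|\zeta-z_k|=r$. For such $w$ the argument principle gives
$$
N(w):=\frac{1}{2\pi i}\int_{|\zeta-z_k|=r}\frac{\partial_\zeta f(w,\zeta)}{f(w,\zeta)}\,d\zeta=\#\{\text{zeros of }f(w,\cdot)\text{ in }|\zeta-z_k|<r\},
$$
counted with multiplicity. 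The integrand is continuous in $w\in V$, so $N$ is a continuous integer‑valued function on the connected set $V$, hence constant; since $N(w_0)=1$ we conclude $N(w)\equiv 1$. Thus for every $w\in V$ the function $f(w,\cdot)$ has exactly one zero in the disc $|\zeta-z_k|<r$, it is simple, and I define $g(w)$ to be that zero.

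Holomorphy of $g$ follows from the residue representation
$$
g(w)=\frac{1}{2\pi i}\int_{|\zeta-z_k|=r}\zeta\,\frac{\partial_\zeta f(w,\zeta)}{f(w,\zeta)}\,d\zeta,
$$
which expresses $g$ as the integral over a fixed contour of a function jointly holomorphic in $(w,\zeta)$ on a neighborhood of $V\times\{|\zeta-z_k|=r\}$. Differentiating under the integral sign in each of the complex variables $\tilde x,\tilde y,\tilde z_1,\ldots,\tilde z_{k-1}$ shows that $g$ is separately holomorphic, and since $g$ takes values in the bounded disc $|\zeta-z_k|<r$ it is locally bounded, so Osgood's lemma (equivalently Hartogs' theorem) upgrades this to joint holomorphy on $V$. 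Finally I would set $W=\{(w,\zeta):w\in V,\ |\zeta-z_k|<r\}$; by construction $W\subset U$, and for $(w,\zeta)\in W$ we have $f(w,\zeta)=0$ if and only if $\zeta=g(w)$, which is precisely the asserted equality of the zero set of $f$ in $W$ with the graph of $g$.

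The only genuinely delicate point is this last upgrade from separate to joint holomorphy of $g$; everything else is bookkeeping around the single‑variable argument principle. If one prefers to avoid invoking Osgood's lemma or Hartogs' theorem, one can instead apply the real implicit function theorem to $f$ viewed as a map $\RR^{2(2+k)}\to\RR^2$ (the relevant $2\times 2$ real Jacobian in the last two real variables is invertible precisely because $\frac{\partial f}{\partial z_k}\neq 0$, by the Cauchy--Riemann equations) to obtain a smooth $g$, and then verify directly that $g$ satisfies the Cauchy--Riemann equations by differentiating the identity $f(w,g(w))\equiv 0$; but the contour‑integral argument is cleaner and also yields uniqueness of the zero in the disc automatically.
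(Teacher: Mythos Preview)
The paper does not prove this theorem; it is merely stated as one of two ``elementary results from complex analysis'' and quoted for later use. Your argument is a correct and standard proof via the one-variable argument principle together with the residue formula for the location of the unique zero.

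One small remark: the appeal to Osgood's lemma or Hartogs' theorem is not really needed. The integrand $\zeta\,\partial_\zeta f(w,\zeta)/f(w,\zeta)$ is holomorphic in $w$ for each fixed $\zeta$ on the circle and jointly continuous on $V\times\{|\zeta-z_k|=r\}$, so Morera's theorem combined with Fubini already shows that the contour integral satisfies the Cauchy--Riemann equations in every $w$-variable; equivalently, your own differentiation under the integral sign verifies those equations directly in each variable, which is precisely joint holomorphy. This is only a cosmetic point---your proof is complete as written, and the alternative route via the real implicit function theorem that you sketch at the end is also valid.
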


\begin{thm}[Nonlinear Cauchy-Kowalevski theorem for ODE]\label{CKThm}
Let $U\subset\CC^{2+(k-1)}$ be a neighborhood of the point $(x,y,z_1,\ldots,z_{k-1})$. Let $g\colon U\to\CC$ be holomorphic. Then there exists a neighborhood $V$ of $x$ so that there is a unique function $h\colon V\to\CC$ that satisfies the Cauchy problem
$$
\left\{
\begin{array}{l}h^{(k)}(t) = g(t, h(t), h^\prime(t),\ldots, h^{(k-1)}(t))\ \textrm{for all}\ t\in V,\\
h(x) = y,\\
h^{(j)}(x) = z_j,\ j = 1,\ldots,k-1.
\end{array}
\right.
$$
\end{thm}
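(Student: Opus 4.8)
The plan is to reduce the scalar $k$-th order equation to a first-order holomorphic system and then establish existence and uniqueness for that system by the method of successive approximations, taking care that every object produced is honestly holomorphic rather than merely real-analytic.

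\emph{Reduction.} Introduce new unknowns $w_0 = h,\ w_1 = h^\prime,\ \ldots,\ w_{k-1} = h^{(k-1)}$. The Cauchy problem then becomes the first-order system
\[
w_j^\prime(t) = w_{j+1}(t)\quad(0\le j\le k-2),\qquad w_{k-1}^\prime(t) = g\bigl(t, w_0(t),\ldots,w_{k-1}(t)\bigr),
\]
with initial data $w_0(x) = y$ and $w_j(x) = z_j$ for $1\le j\le k-1$. Writing $\mathbf{w} = (w_0,\ldots,w_{k-1})$ and letting $F$ denote the right-hand side, which is holomorphic on a polydisc $\Delta$ around $(x,y,z_1,\ldots,z_{k-1})$, this is $\mathbf{w}^\prime = F(t,\mathbf{w})$ with $\mathbf{w}(x) = (y,z_1,\ldots,z_{k-1})$. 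A function $h$ solves the original problem if and only if $\mathbf{w} = (h, h^\prime,\ldots,h^{(k-1)})$ solves the system, so it suffices to prove the system has a unique holomorphic solution near $x$.

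\emph{Existence.} On a small closed disc $V = \{\,|t-x|\le r\,\}$ define Picard iterates $\mathbf{w}^{(0)} \equiv (y,z_1,\ldots,z_{k-1})$ and $\mathbf{w}^{(m+1)}(t) = \mathbf{w}^{(0)} + \int_x^t F\bigl(s,\mathbf{w}^{(m)}(s)\bigr)\,ds$, the integral taken along the segment from $x$ to $t$. Each $\mathbf{w}^{(m)}$ is holomorphic on the interior of $V$, since by Cauchy's theorem on a disc the integral of a holomorphic function depends holomorphically, and single-valuedly, on its endpoint. Choosing $r$ small enough that all iterates remain in $\Delta$, and using that $F$, being holomorphic, is Lipschitz in $\mathbf{w}$ with some constant $L$ on a slightly smaller polydisc, the usual estimate $\sup_V|\mathbf{w}^{(m+1)} - \mathbf{w}^{(m)}| \le C (Lr)^m/m!$ shows the iterates converge uniformly on $V$. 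By the Weierstrass convergence theorem the limit $\mathbf{w}$ is holomorphic on the interior of $V$, and passing to the limit in the integral equation gives $\mathbf{w}(t) = \mathbf{w}^{(0)} + \int_x^t F(s,\mathbf{w}(s))\,ds$; differentiating shows $\mathbf{w}$ solves the ODE with the prescribed data, and $h := w_0$ is the desired holomorphic solution on $V$.

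\emph{Uniqueness and the main difficulty.} If $\mathbf{w}$ and $\tilde{\mathbf{w}}$ are holomorphic solutions on a connected neighborhood of $x$, then near $x$ both satisfy the integral equation, so $|\mathbf{w}(t) - \tilde{\mathbf{w}}(t)| \le L\int_{[x,t]}|\mathbf{w}(s) - \tilde{\mathbf{w}}(s)|\,|ds|$; Gr\"onwall's inequality along the segment gives $\mathbf{w} \equiv \tilde{\mathbf{w}}$ near $x$, and the identity theorem propagates this to the whole connected neighborhood, so in particular $h = w_0$ is unique. The only genuinely nontrivial point in the argument is the insistence on holomorphy: one must check that the Picard integrals, defined a priori along paths, are single-valued holomorphic functions of the endpoint (Cauchy's theorem on a disc), and that a uniform limit of holomorphic functions is holomorphic (Weierstrass); with those two facts the rest is the textbook real-variable ODE argument with $|\cdot|$ and $\int_x^t$ reinterpreted over $\CC$. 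An alternative worth noting is the classical majorant-series proof: the ansatz $h(t) = \sum_{n\ge0} a_n(t-x)^n$ forces the coefficients $a_n$ to be uniquely and recursively determined by $g$ and the initial data, and dominating this recursion by that of an explicitly solvable scalar equation simultaneously yields existence, uniqueness, and a positive radius of convergence.
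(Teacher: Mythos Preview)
The paper does not actually prove this theorem: it is stated without proof as one of two ``elementary results from complex analysis'' (alongside the holomorphic implicit function theorem) that are invoked as black boxes in the proof of Lemma~\ref{curvesSameLem}. So there is nothing in the paper to compare your argument against.

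That said, your proof is correct and is the standard one: reduce the scalar $k$-th order equation to a first-order holomorphic system, run Picard iteration on a small disc, and use Cauchy's theorem plus the Weierstrass convergence theorem to ensure that holomorphy is preserved in the limit; uniqueness follows from Gr\"onwall along segments together with the identity theorem. The care you take to justify single-valuedness of the path integrals and holomorphy of the uniform limit is exactly what distinguishes the complex case from the real one. The majorant-series alternative you mention at the end is the other textbook route and would also work.
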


Next we will establish several results that connect the behavior of the curve $\gamma$ and its lift $L_k(\gamma)$.
\begin{lem}\label{curvesSameLem}
Let $P\in\CC[x,y,z_1,\ldots,z_k]$. Let $\gamma,\tilde\gamma$ be irreducible plane curves. Let $k\geq 1$ be an integer and let $(x,y,z_1,\ldots,z_k)\in\CC^{2+k}$. Suppose that 
\begin{itemize}
\item $(x,y,z_1,\ldots,z_k)\in L_k(\gamma)\subset Z(P)$.
\item $(x,y,z_1,\ldots,z_k)\in L_k(\tilde\gamma)\subset Z(P)$.
\item $(x,y)$ is a smooth point of $\gamma$ and $\tilde\gamma$ where neither curve has vertical tangent.
\item $\frac{d}{dz_k}P(x,y,z_1,\ldots,z_k)\neq 0$.
\end{itemize}
Then $\gamma=\tilde\gamma$.
\end{lem}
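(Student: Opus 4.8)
The plan is to show that $\gamma$ and $\tilde\gamma$ coincide on a neighborhood of the point $(x,y)$, and then to conclude that they are equal as irreducible algebraic curves.

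First I would set up local parameterizations. Since $(x,y)$ is a smooth point of $\gamma$ (resp.\ $\tilde\gamma$) at which the curve has non-vertical tangent, the implicit function theorem supplies a holomorphic function $h$ (resp.\ $\tilde h$) defined on a neighborhood of $x$, with $h(x)=\tilde h(x)=y$, such that near $(x,y)$ the curve $\gamma$ is the graph $\{(t,h(t))\}$ and $\tilde\gamma$ is the graph $\{(t,\tilde h(t))\}$. By the definition of a $k$-th order lift, the analytic curve $t\mapsto(t,h(t),h'(t),\ldots,h^{(k)}(t))$ is contained in $L_k(\gamma)$, and likewise $t\mapsto(t,\tilde h(t),\ldots,\tilde h^{(k)}(t))$ is contained in $L_k(\tilde\gamma)$. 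Since $(x,y,z_1,\ldots,z_k)$ lies on both $L_k(\gamma)$ and $L_k(\tilde\gamma)$, the same definition (applied at the point $(x,y)$ of $\gamma$, resp.\ of $\tilde\gamma$) gives $h^{(j)}(x)=z_j=\tilde h^{(j)}(x)$ for $j=1,\ldots,k$; in particular $h$ and $\tilde h$ have the same $(k-1)$-jet at $x$.

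Next I would use the hypothesis $\tfrac{d}{dz_k}P(x,y,z_1,\ldots,z_k)\neq 0$ together with Theorem \ref{holoImplicitThm} to express $Z(P)$, on a neighborhood $W$ of $(x,y,z_1,\ldots,z_k)$, as a graph $z_k=g(x,y,z_1,\ldots,z_{k-1})$ for a holomorphic function $g$. Because $L_k(\gamma)\subset Z(P)$, for $t$ sufficiently close to $x$ the point $(t,h(t),h'(t),\ldots,h^{(k)}(t))$ lies in $Z(P)$ and, by continuity, inside $W$; hence $h^{(k)}(t)=g(t,h(t),\ldots,h^{(k-1)}(t))$, and the same identity holds for $\tilde h$. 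Thus $h$ and $\tilde h$ both solve the Cauchy problem of Theorem \ref{CKThm} with this $g$ and initial data $h(x)=y$, $h^{(j)}(x)=z_j$ for $j=1,\ldots,k-1$, so the uniqueness clause of that theorem forces $h=\tilde h$ on a neighborhood of $x$. Therefore $\gamma$ and $\tilde\gamma$ agree on an analytic arc near $(x,y)$, hence share infinitely many points, and since both are irreducible plane curves they must be equal.

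The mathematical content lies entirely in assembling the preceding results; the only real care needed is the bookkeeping of nested neighborhoods — arranging that the lifted parameterization $t\mapsto(t,h(t),\ldots,h^{(k)}(t))$ stays inside $W$ and inside the interval of uniqueness produced by Theorem \ref{CKThm} — which is automatic from continuity, since everything passes through $(x,y,z_1,\ldots,z_k)$ at $t=x$. If one must single out a main point, it is the observation that the containments $L_k(\gamma),L_k(\tilde\gamma)\subset Z(P)$ together with non-degeneracy of $P$ in $z_k$ convert the claim into the uniqueness of solutions to a single holomorphic $k$-th order ODE with prescribed initial data.
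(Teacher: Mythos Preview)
Your proof is correct and follows essentially the same approach as the paper: parameterize both curves locally as graphs $h,\tilde h$, use Theorem~\ref{holoImplicitThm} to write $Z(P)$ locally as $z_k=g(x,y,z_1,\ldots,z_{k-1})$, observe that $h$ and $\tilde h$ then satisfy the same Cauchy problem, and invoke the uniqueness in Theorem~\ref{CKThm}. The only differences are cosmetic---you set up the parameterizations before extracting $g$, and you spell out the final step (infinitely many common points force equality of irreducible curves) and the neighborhood bookkeeping more explicitly than the paper does.
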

\begin{proof}
By the implicit function theorem (Theorem \ref{holoImplicitThm}), there exists a neighborhood $U$ of $(x,y,z_1,\ldots,z_{k-1})$ and a holomorphic function $g\colon U\to\CC$ with $g(x,y,z_1,\ldots,z_{k-1})=z_k$ and 
$$
P(\tilde x,\tilde y,\tilde z_1,\ldots,\tilde z_{k-1}, g(\tilde x, \tilde y, \tilde z_1,\ldots,\tilde z_{k-1}))=0\ \textrm{for all}\ (\tilde x, \tilde y, \tilde z_1,\ldots,\tilde z_{k-1})\in U.
$$ 
Again by the implicit function theorem, there is a neighborhood $V$ of $x$ and  functions $h(t),\tilde h(t)\colon V \to\CC$ so that $(t,h(t))$ (resp. $(t,\tilde h(t))$) is a parameterization of $\gamma$ (resp. $\tilde\gamma$) in a neighborhood of $(x,y)$. 

Thus for all $t\in V$, we have
$$
\big(t, h(t), h^\prime(t),\ldots, h^{(k)}(t)\big)\in L_k(\gamma),
$$
and 
$$
\big(t, h(t), h^\prime(t),\ldots, h^{(k-1)}(t)\big)\in U.
$$ 
Since $L_k(\gamma)\subset Z(P)$, we have 
$$
h^{(k)}(t) = g\big(t, h(t), h^\prime(t),\ldots, h^{(k-1)}(t)\big)\ \textrm{for all}\ t\in V,
$$
i.e. the function $h(t)$ satisfies the Cauchy problem
$$
\left\{
\begin{array}{l}h^{(k)}(t) = g(t, h(t), h^\prime(t),\ldots, h^{(k-1)}(t))\ \textrm{for all}\ t\in V,\\
h(x) = y,\\
h^{(j)}(x) = z_j,\ j=1,\ldots,k-1.\\
\end{array}
\right.
$$

On the other hand, the function $\tilde h(t)$ satisfies the same Cauchy problem. By Theorem \ref{CKThm}, we conclude that there exists a neighborhood $W\subset V$ of $x$ so that $h(t) = \tilde h(t)$ for all $t\in W.$ Thus $\gamma = \tilde\gamma$. 
\end{proof}

If distinct curves $\gamma$ and $\gamma^\prime$ satisfy the first three hypotheses of Lemma \ref{curvesSameLem}, then the fourth hypothesis must fail. We will record this observation as the following corollary.

\begin{cor}\label{cor1}
Let $\gamma$ and $\tilde\gamma$ be distinct irreducible curves in $\CC^2$, let $P\in\CC[x,y,z_1,\ldots,z_k],$ and suppose that $L_k(\gamma)\subset Z(P)$ and $L_k(\tilde\gamma)\subset Z(P)$. Let $(x,y,z_1,\ldots,z_k)\in L_k(\gamma)\cap L_k(\tilde\gamma)$. Suppose that $(x,y)$  is a smooth point of $\gamma$ and $\tilde\gamma$ where neither curve has vertical tangent. Then 
$$
(x,y,z_1,\ldots,z_k) \in Z( Q );\quad Q = \frac{d}{dz_k}P.
$$   
\end{cor}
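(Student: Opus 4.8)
The plan is to derive Corollary~\ref{cor1} directly from Lemma~\ref{curvesSameLem} by a contrapositive argument. Suppose for contradiction that $(x,y,z_1,\ldots,z_k)\notin Z(Q)$, i.e.\ that $\frac{d}{dz_k}P(x,y,z_1,\ldots,z_k)\neq 0$. Then I claim all four hypotheses of Lemma~\ref{curvesSameLem} are satisfied: the first two hold because $(x,y,z_1,\ldots,z_k)\in L_k(\gamma)\cap L_k(\tilde\gamma)$ and both lifts are contained in $Z(P)$ by assumption; the third is exactly the smoothness/non-vertical-tangent hypothesis we are given; and the fourth is precisely our contradiction hypothesis $Q(x,y,z_1,\ldots,z_k)\neq 0$. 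Lemma~\ref{curvesSameLem} then forces $\gamma=\tilde\gamma$, contradicting the assumption that $\gamma$ and $\tilde\gamma$ are distinct. Hence $(x,y,z_1,\ldots,z_k)\in Z(Q)$.

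There is essentially no obstacle here: the corollary is just the contrapositive repackaging of the lemma, as the remark preceding it already signals ("If distinct curves $\gamma$ and $\gamma'$ satisfy the first three hypotheses of Lemma~\ref{curvesSameLem}, then the fourth hypothesis must fail"). The only point requiring a sentence of care is confirming that each of the four bulleted hypotheses of the lemma is available: three are literally among the corollary's hypotheses, and the negation of the conclusion supplies the fourth. One should also note that $Q = \frac{d}{dz_k}P$ is a genuine polynomial in $\CC[x,y,z_1,\ldots,z_k]$ (a partial derivative of a polynomial), so writing $Z(Q)$ is legitimate, though this is automatic.

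Concretely, I would write: Assume toward a contradiction that $(x,y,z_1,\ldots,z_k)\notin Z(Q)$, so that $\frac{d}{dz_k}P(x,y,z_1,\ldots,z_k)\neq 0$. Apply Lemma~\ref{curvesSameLem} with this $P$, these curves $\gamma,\tilde\gamma$, and this point $(x,y,z_1,\ldots,z_k)$: the hypotheses $(x,y,z_1,\ldots,z_k)\in L_k(\gamma)\subset Z(P)$ and $(x,y,z_1,\ldots,z_k)\in L_k(\tilde\gamma)\subset Z(P)$ follow from $(x,y,z_1,\ldots,z_k)\in L_k(\gamma)\cap L_k(\tilde\gamma)$ together with $L_k(\gamma)\subset Z(P)$ and $L_k(\tilde\gamma)\subset Z(P)$; the smoothness and non-vertical-tangent hypotheses are given; and the derivative hypothesis is our assumption. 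Lemma~\ref{curvesSameLem} yields $\gamma=\tilde\gamma$, contradicting distinctness. Therefore $\frac{d}{dz_k}P(x,y,z_1,\ldots,z_k)=0$, i.e.\ $(x,y,z_1,\ldots,z_k)\in Z(Q)$, which completes the proof.
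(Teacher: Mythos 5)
Your proof is correct and is exactly the argument the paper intends: the corollary is the contrapositive of Lemma~\ref{curvesSameLem}, as the paper itself signals in the sentence immediately preceding the statement. Nothing more is needed.
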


Since the curve $L_k(\gamma)$ has degree $O(1)$, if $L_k(\gamma)$ intersects $Z(Q)$ in more than $O(\deg P)$ places then by B\'ezout's theorem $L_k(\gamma)$ must be contained in $Z(Q)$. We will record this observation as the following corollary.

\begin{cor}\label{intersectionForcesVertGradient}
Let $\mathcal{C}$ be a set of irreducible algebraic curves in $\CC^2$, each of degree at most $D$ and none of which are a vertical line. Let $k\geq 1$ be a positive integer, let $P\in\CC[x,y,z_1,\ldots,z_{k}]$, and suppose that $L_k(\gamma)\subset Z(P)$ for each $\gamma\in\mathcal{C}$. Then for each curve $\gamma\in\mathcal{C}$ we have that either
$$
|\{ L_k(\gamma)\cap L_k(\tilde\gamma)\colon\tilde\gamma\in\mathcal{C},\ \tilde\gamma\neq\gamma \}| = O(\deg P),
$$
or 
$$
L_k(\gamma) \subset Z( Q );\quad Q = \frac{d}{dz_k}P.
$$
\end{cor}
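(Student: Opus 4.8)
The plan is to run the B\'ezout dichotomy indicated just before the statement, the real content being a strengthening of Lemma~\ref{curvesSameLem}. Fix $\gamma\in\mathcal{C}$ and put $X=\bigcup_{\tilde\gamma\in\mathcal{C}\setminus\{\gamma\}}\bigl(L_k(\gamma)\cap L_k(\tilde\gamma)\bigr)$; we must show that either $|X|=O(\deg P)$ or $L_k(\gamma)\subset Z(Q)$. Assume $L_k(\gamma)\not\subset Z(Q)$, so in particular $Q\neq 0$. Since $L_k(\gamma)$ is irreducible of degree $O(1)$ and is not contained in the hypersurface $Z(Q)$ (of degree at most $\deg P$), B\'ezout's theorem gives $|L_k(\gamma)\cap Z(Q)|=O(\deg P)$, so it suffices to prove $|X\setminus Z(Q)|=O(1)$.

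I claim that every $p=(x_0,y_0,z_1,\dots,z_k)\in X$ whose image $(x_0,y_0)$ is a smooth point of $\gamma$ with non-vertical tangent lies in $Z(Q)$. Granting this, each point of $X\setminus Z(Q)$ lies above one of the (finitely many, hence $O(1)$) singular or vertical-tangent points of $\gamma$; as $L_k(\gamma)$ is an irreducible degree-$O(1)$ curve dominating $\gamma$, the fiber above each such point is finite of size $O(1)$, so $|X\setminus Z(Q)|=O(1)$ and we are done. Corollary~\ref{cor1} already yields the claim when $(x_0,y_0)$ is \emph{also} a smooth non-vertical point of the second curve $\tilde\gamma$; the content of the claim is to drop that hypothesis, since $(x_0,y_0)$ may be a singular point of $\tilde\gamma$.

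To prove the claim, suppose for contradiction that $\frac{d}{dz_k}P(p)\neq 0$, and fix $\tilde\gamma\neq\gamma$ with $p\in L_k(\gamma)\cap L_k(\tilde\gamma)$. By Theorem~\ref{holoImplicitThm}, near $p$ the set $Z(P)$ is the graph $z_k=G(x,y,z_1,\dots,z_{k-1})$ of a holomorphic function $G$; let $\pi'$ be the coordinate projection forgetting $z_k$ and $p'=\pi'(p)$, so $\pi'$ restricts to a biholomorphism of a neighborhood of $p$ in $Z(P)$ onto a neighborhood of $p'$, with inverse $q\mapsto(q,G(q))$. Let $\hat V$ be the holomorphic vector field near $p'$ whose value at a point $(x,y,z_1,\dots,z_{k-1})$ is $(1,z_1,z_2,\dots,z_{k-1},G(x,y,z_1,\dots,z_{k-1}))$, with components taken in the coordinates $x,y,z_1,\dots,z_{k-1}$; its $x$-component is identically $1$, so $\hat V$ is nowhere zero. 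Writing $\gamma$ locally as $(t,h(t))$ and using $L_k(\gamma)\subset Z(P)$, the function $h$ solves the Cauchy problem $h^{(k)}=G(t,h,h',\dots,h^{(k-1)})$, $h(x_0)=y_0$, $h^{(j)}(x_0)=z_j$ ($1\le j\le k-1$); hence $\pi'(L_k(\gamma))$ is, near $p'$, exactly the integral curve of $\hat V$ through $p'$.

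The step I expect to be the main obstacle is to show that $\pi'(L_k(\tilde\gamma))$ is, up to reparametrization, also the integral curve of $\hat V$ through $p'$ — the difficulty being that $L_k(\tilde\gamma)$ may be singular at $p$ and the $x$-coordinate a ramified parameter there, so one cannot simply read off the same ODE via the implicit function theorem as in Lemma~\ref{curvesSameLem}. I would argue on the normalization: the $1$-forms $dy-z_1\,dx$ and $dz_j-z_{j+1}\,dx$ ($1\le j\le k-1$) vanish on the dense open subset of $L_k(\tilde\gamma)$ lying above the smooth non-vertical locus of $\tilde\gamma$ (there $L_k(\tilde\gamma)$ is literally a prolonged graph $(t,g(t),g'(t),\dots)$), hence — since a rational $1$-form on an irreducible curve vanishing on a dense subset vanishes identically — everywhere on $L_k(\tilde\gamma)$; and near $p$ we have $z_k=G(x,y,z_1,\dots,z_{k-1})$ on $L_k(\tilde\gamma)$ because $L_k(\tilde\gamma)\subset Z(P)$. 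Pulled back along a local holomorphic parametrization $\beta(\tau)$ of the normalization of $L_k(\tilde\gamma)$ near $p$ and combined, these relations give $\hat\beta'(\tau)=\lambda(\tau)\,\hat V(\hat\beta(\tau))$, where $\hat\beta=\pi'\circ\beta$ and $\lambda=(x\circ\beta)'$ is holomorphic; by uniqueness for this ODE (equivalently, the uniqueness clause of Theorem~\ref{CKThm}, applied where $\lambda\neq0$ and then passed to the limit $\tau\to0$), the image of $\hat\beta$ lies in the integral curve of $\hat V$ through $p'$, i.e.\ in $\pi'(L_k(\gamma))$. Therefore $L_k(\tilde\gamma)$ and $L_k(\gamma)$ agree in a neighborhood of $p$; being irreducible curves, $L_k(\gamma)=L_k(\tilde\gamma)$, whence $\gamma=\tilde\gamma$ (a curve is the Zariski closure of the projection of its lift), contradicting $\tilde\gamma\neq\gamma$. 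This proves the claim.
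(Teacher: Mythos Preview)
Your argument is correct, and in fact it is more careful than the paper's own treatment. The paper does not give a proof of this corollary; it simply records it as the combination of Corollary~\ref{cor1} with B\'ezout's theorem (``if $L_k(\gamma)$ intersects $Z(Q)$ in more than $O(\deg P)$ places then by B\'ezout's theorem $L_k(\gamma)$ must be contained in $Z(Q)$''). Your overall skeleton is the same B\'ezout dichotomy.

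Where you go beyond the paper is in noticing that Corollary~\ref{cor1} literally requires $(x_0,y_0)$ to be a smooth non-vertical point of \emph{both} $\gamma$ and $\tilde\gamma$, whereas the set $X$ in the statement may contain points lying over singular (or vertical-tangent) points of $\tilde\gamma$. The paper glosses over this; you fill the gap by strengthening Lemma~\ref{curvesSameLem} so that only smoothness of $\gamma$ at the base point is needed. Your method---observing that the contact $1$-forms $dy-z_1\,dx$ and $dz_j-z_{j+1}\,dx$ vanish identically on $L_k(\tilde\gamma)$ (since they vanish on the Zariski-dense prolonged-graph locus), pulling back to the normalization, and invoking ODE uniqueness---is sound. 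The one step you flag as delicate (``applied where $\lambda\neq 0$ and then passed to the limit'') is justified most cleanly by a flow-box argument: straightening $\hat V$ to $\partial_s$ turns $\hat\beta'=\lambda\hat V(\hat\beta)$ into $s'=\lambda$, $w'=0$, so $w\equiv 0$ and $\hat\beta$ stays on the integral curve through $p'$ regardless of whether $\lambda$ vanishes at $\tau=0$.

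In short: same approach as the paper, but you have identified and repaired a genuine (if minor) omission in the paper's justification.
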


\section{Proof of Theorem \ref{mainThm}}
We are now ready to prove Theorem \ref{mainThm}. Our proof will use ``polynomial method'' ideas originally developed by Dvir \cite{D} and Guth and Katz \cite{GK}. The specific formulation used here is closely related to the arguments used by Kaplan, Sharir, and Shustin in \cite{KSS} to solve the joints problem in $\mathbb{R}^d$.

\begin{proof}[Proof of Theorem \ref{mainThm}] Applying a rotation if necessary, we can assume that none of the curves in $\mathcal{C}$ are vertical lines and that no two curves in $\mathcal{C}$ are tangent at a point of vertical tangency.

For each subset $\tilde{\mathcal{C}}\subset \mathcal{C}$, define
$$
\mathcal{P}_2(L_k(\tilde{\mathcal{C}}))=\bigcup_{\substack{\gamma,\tilde\gamma\in\tilde{\mathcal{C}}\\\gamma\neq\tilde\gamma}}L_k(\gamma)\cap L_k(\tilde\gamma).
$$

Observe that
$$
\sum_{p\in\CC^2}m_{k,\mathcal{C}}(p) = \sum_{\gamma\in\mathcal{C}} |\mathcal{P}_2(L_k(\mathcal{C}))\cap L_k(\gamma)|.
$$
Let $A$ be a large constant depending only on $D$ and $k$. Define $\mathcal{C}_0=\mathcal{C}$. For each $j=1,\ldots,$ define 
$$
\mathcal{C}_j = \big\{\gamma\in \mathcal{C}_{j-1}\colon |\mathcal{P}_2(L_k(\mathcal{C}_{j-1}))\cap L_k(\gamma)|\geq An^{\frac{1}{k+1}}\big\}.
$$
With this definition, we obtain an infinite sequence of nested sets $\mathcal{C}_0\supset\mathcal{C}_1\supset\mathcal{C}_2\supset\ldots.$ Let $N$ be the smallest index so that $\mathcal{C}_N = \mathcal{C}_{N+1}$; we have $N\leq n$. It might be the case that $\mathcal{C}_N = \emptyset$. Observe that
$$
\sum_{\gamma\in\mathcal{C}} |\mathcal{P}_2(L_k(\mathcal{C}))\cap L_k(\gamma)|\leq 2 \sum_{j=0}^{N-1} \sum_{\gamma\in\mathcal{C}_{j}\backslash\mathcal{C}_{j+1}}|\mathcal{P}_2(L_k(\mathcal{C}_j))\cap L_k(\gamma)|+\sum_{\gamma\in\mathcal{C}_N}|\mathcal{P}_2(L_k(\mathcal{C}_N))\cap L_k(\gamma)|.
$$
The first (double) sum on the right contains at most $|\mathcal{C}|=n$ terms, each of which have size at most $An^{\frac{1}{k+1}}$. Thus the sum has size $O(n^{\frac{k+2}{k+1}})$. To complete the proof, it suffices to show that the second sum has size $O(n^{\frac{k+2}{k+1}})$. We will show the stronger statement that $|\mathcal{C}_N| = O(n^{\frac{1}{k+1}})$ (this in fact implies that $\mathcal{C}_N=\emptyset$ if the constant $A$ is chosen sufficiently large, though we will not need this fact).

Let $P_{k}\in\CC[x,y,z_1,\ldots,z_k]$ be a non-zero polynomial of minimal degree that vanishes on the curves $\{L_k(\gamma)\colon\gamma\in\mathcal{C}_N\}$. We have $\deg P_k =O(|\mathcal{C}_N|^{\frac{1}{k+1}})=O(n^{\frac{1}{k+1}}).$ By Corollary \ref{intersectionForcesVertGradient}, we have that if $A$ is chosen sufficiently large then $L_k(\gamma)\subset Z(Q_k),\ Q_k = \frac{d}{dz_k}P_k$, for each $\gamma\in\mathcal{C}_N$. Since $\deg Q_k< \deg P_k,$ we conclude that $Q_k=0$, i.e. $P_{k}(x,y,z_1,\ldots,z_k) = P_{k-1}(x,y,z_1,\ldots,z_{k-1})$ for some non-zero polynomial $P_{k-1}\in\CC[x,y,z_1,\ldots,z_{k-1}]$ of degree $\deg P_{k-1} = \deg P_k$. 

Observe that $P_{k-1} $ is a polynomial of minimal degree in $\CC[x,y,z_1,\ldots,z_{k-1}]$ that vanishes on the curves $\{L_{k-1}(\gamma)\colon\gamma\in\mathcal{C}_N\}$; indeed, if there was a polynomial $R(x,y,z_1,\ldots,z_{{k-1}})$ of smaller degree that vanished on the curves $\{L_{k-1}(\gamma)\colon\gamma\in\mathcal{C}_N\}$, then the polynomial $\tilde R(x,y,z_1,\ldots,z_k) = R(x,y,z_1,\ldots,z_{k-1})$ would contradict the requirement that $P_k$ is a non-zero polynomial of minimal degree that vanishes on the curves $\{L_{k}(\gamma)\colon\gamma\in\mathcal{C}_N\}$.

Repeating the above argument, we see that each of the curves $L_{k-1}(\gamma),\ \gamma\in\mathcal{C}_N$ is contained in $Z(Q_{k-1}),\ Q_{k-1}=\frac{d}{dz_{k-1}}P_{k-1},$ and thus $Q_{k-1}=0$, so $P_{k-1}(x,y,z_1,\ldots,z_{k-1}) = P_{k-2}(x,y,z_1,\ldots,z_{k-2})$. Iterating this process $k$ times, we obtain a polynomial $P_0\in\CC[x,y]$ of degree $O(n^{\frac{1}{k+1}})$ that vanishes on each of the curves from $\mathcal{C}_N$. We conclude that $|\mathcal{C}_N| = O(n^{\frac{1}{k+1}})$. 
\end{proof}

\section{Further Remarks}
In \cite{ESZ}, Ellenberg, Solymosi, and the author proved a bound on the number of (first order) tangencies determined by a collection of plane algebraic curves in $F^2$, where $F$ is an arbitrary field whose characteristic is not too small compared to the number of curves. 

We conjecture that a similar result should hold for higher order tangencies. First, we must define what it means for two curves in $F^2$ to be tangent at a point $p\in F^2$ to order $\geq k$. One way of doing this is as follows. If $F$ is an algebraically closed field and $\gamma,\tilde\gamma$ are algebraic curves in $F^2$, we say that $\gamma$ and $\tilde\gamma$ are tangent at the origin to order $\geq k$ if $\dim_{F}F[[x,y]]/(f,\tilde f)\geq k$, where $f$ (resp.~$\tilde f)$ is a square-free polynomial whose zero-locus is $\gamma$ (resp.~$\tilde\gamma$) . We can extend this definition to define tangency at an arbitrary point $p\in F^2$ by translating $p$ to the origin. We can then extend this definition to the case where the field $F$ is not algebraically closed by replacing $F$ with its algebraic closure $\bar F$, and replacing $\gamma,\tilde\gamma$ with their Zariski closures inside $\bar F$. 

While this definition is rather technical, it simplifies considerably in the special case where $\gamma$ and $\tilde\gamma$ are the graphs of univariate polynomials. If 
\begin{equation*}
\begin{split}
\gamma &= \{(x,y)\in F^2\colon y = f(x)\},\\
 \tilde\gamma & = \{(x,y)\in F^2\colon y = \tilde f(x)\},
 \end{split}
 \end{equation*} 
 then $\gamma$ and $\tilde\gamma$ are tangent to order $\geq k$ at $(x,y)$ if $f(x) =\tilde f(x)$, and $\frac{d^j}{dx^j}f(x) = \frac{d^j}{dx^j}\tilde f(x)$ for each $j=1,\ldots,k$, where $\frac{d^j}{dx^j}f(x)$ is the (formal) derivative of $f$ at $x$. 

With this definition of tangency, the quantity $m_{k,\mathcal{C}}(p)$ from Definition \ref{countOfKTangencies} makes sense, and we can state our conjecture precisely.

\begin{conj}\label{arbFieldConj}
Let $F$ be a field, let $k\geq 1$ be an integer, and let $\mathcal{C}$ be a set of $n$ irreducible curves in $F^2$. Suppose that $n\leq \operatorname{char}(F)^{k+1}$ (if $\operatorname{char}(F) = 0$ then we place no restrictions on $n$). Then

\begin{equation}\label{boundOnNumberTangenciesArbField}
\sum_{p\in F^2}m_{k,\mathcal{C}}(p)=O(n^{\frac{k+2}{k+1}}).
\end{equation}
\end{conj}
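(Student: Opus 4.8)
The plan is to adapt the polynomial-method argument used for Theorem~\ref{mainThm} over $\CC$, replacing the holomorphic tools (the holomorphic implicit function theorem and the Cauchy--Kowalevski theorem) with their formal-power-series analogues over $\bar F$. First I would observe that all the algebraic-geometric input --- the Lifting Lemma, B\'ezout's theorem, and the dimension count producing a low-degree interpolating polynomial --- works verbatim over any algebraically closed field, so after passing to $\bar F$ and taking Zariski closures we may assume $F$ is algebraically closed. The curve $L_k(\gamma)$ is still defined by implicitly differentiating the defining polynomial $f$ of $\gamma$ $k$ times; the only subtlety is that $Q_j(x,y,z_1,\dots,z_{j-1})$, the coefficient of $z_j$ in $P_j$, is a power of $\partial_y f$ (times a constant), so one needs $\partial_y f$ to be a nonzero polynomial, which holds because $\gamma$ is not a vertical line, exactly as in the excerpt.

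Next I would reprove Lemma~\ref{curvesSameLem} in the formal setting. Given a smooth point $p=(x,y)$ of $\gamma$ and $\tilde\gamma$ with non-vertical tangent, Hensel's lemma (the formal implicit function theorem) lets us solve $f(t,Y)=0$ for a formal power series $Y=h(t)\in F[[t-x]]$, and similarly $\tilde h$ for $\tilde\gamma$; these are the formal analogues of the local analytic parameterizations. The hypothesis $\partial_{z_k}P(p,z_1,\dots,z_k)\neq0$ together with Hensel's lemma produces a formal power series $g$ with $z_k=g(x,y,z_1,\dots,z_{k-1})$ solving $P=0$. Then $h$ and $\tilde h$ both satisfy the formal Cauchy problem $h^{(k)}=g(t,h,h',\dots,h^{(k-1)})$ with the same initial data $h(x)=y$, $h^{(j)}(x)=z_j$. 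The formal Cauchy--Kowalevski theorem --- i.e.\ the elementary fact that such a recursion determines all Taylor coefficients of $h$ uniquely, \emph{provided} we can divide by the integers $1,2,\dots,k!$ that appear when extracting $h^{(k)}$ from its formal $k$th derivative --- forces $h=\tilde h$, hence $\gamma=\tilde\gamma$. This is precisely where the hypothesis $n\le\operatorname{char}(F)^{k+1}$ will be used: one must ensure that the relevant denominators, and the exponents of $\partial_y f$ appearing along the way, are invertible, which (as in \cite{ESZ}) amounts to $\operatorname{char}(F)$ being larger than a quantity controlled by $k$ and the degree of the interpolating polynomial $P_k$, and $\deg P_k=O(n^{1/(k+1)})$. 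Once Lemma~\ref{curvesSameLem} holds, Corollaries~\ref{cor1} and~\ref{intersectionForcesVertGradient} follow formally, and the descent argument $P_k\to P_{k-1}\to\cdots\to P_0$ from the proof of Theorem~\ref{mainThm} goes through unchanged, giving $|\mathcal{C}_N|=O(n^{1/(k+1)})$ and hence \eqref{boundOnNumberTangenciesArbField}.

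The main obstacle I anticipate is bookkeeping the characteristic hypothesis correctly: one has to track every place where an integer or a power of $\partial_y f$ is inverted --- in the formal Cauchy--Kowalevski step, in the Hensel-lemma solves, and in relating the formal tangency data to the "$f(x)=\tilde f(x)$, $\frac{d^j}{dx^j}f=\frac{d^j}{dx^j}\tilde f$" description --- and verify that the total constraint is implied by $n\le\operatorname{char}(F)^{k+1}$ via $\deg P_k=O(n^{1/(k+1)})$. A secondary technical point is checking that the power-series parameterization $h(t)$ genuinely lifts to a point of $L_k(\gamma)$ in the formal sense, i.e.\ that $(t,h,h',\dots,h^{(k)})$ annihilates each $P_j$; this is a direct consequence of formally differentiating the identity $f(t,h(t))=0$, again using that derivatives of order $\le k$ involve only invertible integer factors. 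Modulo these characteristic-chasing issues, the argument is a faithful formal-algebraic transcription of the complex-analytic proof.
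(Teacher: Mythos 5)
This statement is \emph{Conjecture}~\ref{arbFieldConj}, not a theorem: the paper explicitly leaves it open and offers no proof, only a sharpness example. So there is no paper argument to compare yours against, and a review must assess whether your sketch actually closes the gap.

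Your structural plan — mirror the lifting-and-descent argument, swapping the holomorphic implicit function theorem and Cauchy--Kowalevski for Hensel's lemma and a formal power-series recursion — is the natural first thing to try, and it is almost certainly what the author had in mind when formulating the conjecture. But the ``characteristic-chasing'' you flag as a secondary bookkeeping concern is in fact the substantive obstruction, and your sketch does not resolve it. In characteristic $p$, the formal Cauchy--Kowalevski uniqueness you rely on in the analogue of Lemma~\ref{curvesSameLem} is simply false: the formal derivative annihilates $t^p$, so if $h(t)$ satisfies $h^{(k)}=g(t,h,\dots,h^{(k-1)})$ with prescribed jets at $t=x$, then so does $h(t)+c(t-x)^{p}$ whenever $p>k$, and likewise for $(t-x)^{p^2}$, etc. The recursion determines $a_{j}$ only for those $j$ with $j(j-1)\cdots(j-k+1)\not\equiv 0\pmod p$, and there are infinitely many bad $j$ no matter how large $p$ is. The hypothesis $n\le\operatorname{char}(F)^{k+1}$ bounds $\deg P_k$, but it does not bound the order to which the formal parameterizations $h,\tilde h$ must agree, so it does not obviously rescue uniqueness. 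Some additional input is needed — either exploiting that $h,\tilde h$ are algebraic of bounded degree (so agreement to sufficiently high finite order forces equality), or replacing the ODE-uniqueness step by a purely algebraic argument as in \cite{ESZ} for $k=1$. There is also a second unaddressed gap: the paper's definition of $k$-th order tangency over $F$ is via the local intersection multiplicity $\dim_F F[[x,y]]/(f,\tilde f)\ge k$, and you would need to verify that this matches the ``first $k$ formal derivatives agree'' condition that your lifted curves $L_k(\gamma)$ actually encode; over small characteristic these two notions can diverge. Until both of these are pinned down, what you have is a plausible blueprint, not a proof — which is consistent with the paper leaving the statement as a conjecture.
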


If Conjecture \ref{arbFieldConj} is true, then it is sharp. To see this, let $\mathcal{C}^\prime=\mathcal{C}_1\cup\mathcal{C}_2$, where 
$$
\mathcal{C}_i=\{(x,y)\in \FF_p^2\colon y = ix^{k+1}+a_kx^k+\ldots+a_0; a_0,\ldots,a_k\in\FF_p\}.
$$
We have $|\mathcal{C}^\prime|=2p^{k+1};$ each curve in $\mathcal{C}^\prime$ is irreducible; and for each $(x,y,z_1,\ldots,z_k)\in\FF_p^{k+2}$, there are curves $\gamma_1=\{y=x^{k+1}+P_1(x)\}\in\mathcal{C}_1$ and $\gamma_2=\{y=2x^{k+1}+P_2(x)\}\in\mathcal{C}_2$ so that $ix^{k+1}+P_j(x) = y$, and $\frac{d^j}{dx^j}(ix^{k+1}+P_j(x))=z_j$ for each $i=1,2$ and each $j=1,\ldots,k$. Thus
$$
\sum_{p\in \FF_p^2}m_{k,\mathcal{C}^\prime}(p)=p^{k+1}.
$$

 Finally, let $\mathcal{C}\subset\mathcal{C}^\prime$ be obtained by randomly selecting each curve $\gamma\in\mathcal{C}^\prime$ with probability $1/4$. Then with high probability we have $|\mathcal{C}|\leq \frac{1}{2}|\mathcal{C}^\prime|=p^{k+1},$ and 
$$
\sum_{p\in \FF_p^2}m_{k,\mathcal{C}}(p)\geq \frac{1}{100}p^{k+1}\geq \frac{1}{100}|\mathcal{C}|^{(k+2)/(k+1)}.
$$

\end{document}